\newtheorem*{theorem*}{\hspace{-6.3mm}\textbf{Theorem}}  
\newtheorem{theoremcounter}{Theorem Counter}[section]
\theoremstyle{remark}
\newtheorem*{remark*}{Remark}
\theoremstyle{definition}
\newtheorem{definition}[theoremcounter]{Definition}
\newtheorem*{example*}{Example}
\theoremstyle{plain}
\newtheorem{lemma}[theoremcounter]{Lemma}
\newtheorem{proposition}[theoremcounter]{Proposition}
\newtheorem{corollary}[theoremcounter]{Corollary}
\newtheorem{theorem}[theoremcounter]{Theorem}
\numberwithin{equation}{section}
\newcommand{\Z}{\mathbb{Z}}
\newcommand{\R}{\mathbb{R}}
\newcommand{\C}{\mathbb{C}}
\newcommand{\G}{\Gamma}
\newcommand{\g}{\gamma}
\newcommand{\tr}{\mathrm{tr}}
\newcommand{\bbH}{\mathbb{H}}
\DeclareMathOperator{\ImNew}{Im}
\renewcommand{\Im}{\ImNew}
\DeclareMathOperator{\SL}{SL}
\DeclareMathOperator{\PSL}{PSL}
\DeclareMathOperator{\sgn}{sgn}
\newcommand{\pmat}[1]{\begin{pmatrix}#1\end{pmatrix}}
\newcommand{\smat}[1]{\bigl(\begin{smallmatrix}#1\end{smallmatrix}\bigr)}
\begin{document}

\title[]{Note on an explicit formula of Rademacher symbols\\ for triangle groups} 

\author{Toshiki Matsusaka}
\address{Faculty of Mathematics, Kyushu University, Motooka 744, Nishi-ku, Fukuoka 819-0395, Japan}
\email{matsusaka@math.kyushu-u.ac.jp}

\author{Gyucheol Shin}
\address{Department of Mathematics, Sungkyunkwan University, Suwon 16419, Korea}
\email{sgc7982@gmail.com}

\subjclass[2020]{Primary 11F20; Secondary 57M10}




\maketitle

\begin{abstract}
	The purpose of this note is to present an explicit formula of the Rademacher symbols for triangle groups. This result generalizes Ghys' third proof of the identity relating to the linking numbers of modular knots.
\end{abstract}


\section{Introduction}

Over a century ago, in 1892, Richard Dedekind~\cite{Dedekind1892} explored functional equations of $\log \eta(z)$, defined as the generating function of the $(-1)$-th divisor sum
\[
	\log \eta(z) \coloneqq \frac{\pi iz}{12} - \sum_{n=1}^\infty \sum_{d \mid n} d^{-1} e^{2\pi inz} \quad (z \in \bbH),
\]
where $\bbH \coloneqq \{z \in \C : \Im(z) > 0\}$ is the upper-half plane. Specifically, for any $\gamma = \smat{a & b \\ c & d} \in \SL_2(\Z)$ with $c \neq 0$, Dedekind derived the following formula.
\[
	\log \eta \left(\frac{az+b}{cz+d}\right) = \log \eta(z) + \frac{1}{2} \log \left(\frac{cz+d}{i \sgn c} \right) + \frac{\pi i}{12} \Phi(\gamma),
\]
where we take a branch $\Im \log(z) \in [-\pi, \pi)$, and $\Phi : \SL_2(\Z) \to \Z$ is defined by
\[
	\Phi(\gamma) = \frac{a+d}{c} - 12 \sgn c \sum_{k=1}^{|c|-1} \left(\!\!\left( \frac{k}{c} \right)\!\!\right)  \left(\!\!\left( \frac{ka}{c} \right)\!\!\right)
\]
with the notation $(\!(x)\!) = x - \lfloor x \rfloor - 1/2$ if $x \not\in \Z$ and $(\!(x)\!) = 0$ if $x \in \Z$. For $c = 0$, we set $\Phi(\gamma) = b/d$. As Hans Rademacher noted~\cite[p.1]{Rademacher1972}, at first glance, Dedekind's $\Phi(\gamma)$ may seem like a specialized subject. However, it has been extensively studied over time as a highly ubiquitous function within a broad context, as detailed in Rademacher's monograph~\cite{Rademacher 1972} and Atiyah's omnibus theorem~\cite[p.371]{Atiyah1987}. 

The \emph{Rademacher symbol} $\Psi: \SL_2(\Z) \to \Z$, the focus of this note, is a slight modification of Dedekind's function. It is defined by $\Psi(\gamma) \coloneqq \Phi(\gamma) - 3\sgn(c(a+d))$, and as discovered by Rademacher~\cite{Rademacher1956} in 1956, this modification ensures that $\Psi(\gamma)$ is a conjugacy class invariant on $\PSL_2(\Z) \coloneqq \SL_2(\Z)/\{\pm I\}$.

\begin{theorem*}[{\cite[Satz 7]{Rademacher1956}}]
	For any $\gamma, g \in \SL_2(\Z)$, we have $\Psi(\gamma) = \Psi(-\gamma) = \Psi(g^{-1} \gamma g)$.
\end{theorem*}
Additionally, he provided its simple explicit formula in terms of the generators $S = \smat{0 & -1 \\ 1 & 0}$ and $U = \smat{1 & -1 \\ 1 & 0}$ of $\SL_2(\Z)$. 
\begin{theorem*}[{\cite[Satz 8]{Rademacher1956}}]
	For $\gamma \in \SL_2(\Z)$ conjugate to $\pm S U^{\varepsilon_1} \cdots S U^{\varepsilon_r}$ with $\varepsilon_j \in \{-1, +1\}$, we have
	\[
		\Psi(\gamma) = \sum_{j=1}^r \varepsilon_j,
	\]
\end{theorem*}

This formula has found surprising applications, notably in Ghys' study of modular knots~\cite{Ghys2007}. Ghys discovered that the linking number, an invariant of links, coincides with the Rademacher symbol. To be more precise, \emph{modular knots} are defined as closed orbits of the geodesic flow $\varphi^t : M \mapsto M \smat{e^t & 0 \\ 0 & e^{-t}}$ ($t \in \R$) defined on the homogeneous space $\SL_2(\Z) \backslash \SL_2(\R)$. It is known that modular knots can be parameterized by elements $\gamma \in \SL_2(\Z)$ such that $\mathrm{tr} (\gamma) > 2$. As Milnor~\cite[p.84]{Milnor1971} showed, with credit given to Quillen, this homogeneous space is homeomorphic to the complement of the trefoil knot $K_{2,3}$ in the three-dimensional sphere $S^3$, i.e., $\SL_2(\Z) \backslash \SL_2(\R) \cong S^3 - K_{2,3}$. That is to say, for each $\gamma \in \SL_2(\Z)$ with $\mathrm{tr}(\gamma) > 2$, a two-component link in $S^3$ is formed by the modular knot $C_\gamma$ and the trefoil knot $K_{2,3}$. Then, Ghys proved the following identity.

\begin{theorem*}[{\cite[Section 3]{Ghys2007}}]
	For every $\gamma \in \SL_2(\Z)$ with $\mathrm{tr}(\gamma) > 2$, the linking number $\mathrm{lk}(C_\gamma, K_{2,3})$ between $C_\gamma$ and $K_{2,3}$ is equal to the Rademacher symbol $\Psi(\gamma)$.
\end{theorem*}

Ghys offered three different proofs of this result, the third of which is our main focus. By employing \emph{template theory}, Ghys traces the modular knot orbits and compares them to the combinatorial definition of the linking numbers. He showed that for $\gamma$ conjugate to $\pm SU^{\varepsilon_1} \cdots SU^{\varepsilon_r}$ with $\varepsilon_j \in \{-1, +1\}$, the linking number is given by
\[
	\mathrm{lk}(C_\gamma, K_{2,3}) = \sum_{j=1}^r \varepsilon_j.
\]
This result coincides precisely with Rademacher's explicit formula. This is his third proof.

Building on this work, the first author and Ueki~\cite{MatsusakaUeki2023} extended Ghys' result from $\SL_2(\Z)$ to an infinite family of triangle groups $\Gamma_{p,q}$, where $(p,q)$ is a coprime pair of integers with $2 \le p < q$. The \emph{triangle group} is a discrete subgroup of $\SL_2(\R)$ generated by two torsion elements $S_p, U_q$, satisfying the relations $S_p^p = U_q^q = -I$. In particular, when $(p,q) = (2,3)$, we have $\Gamma_{2,3} = \SL_2(\Z)$, thus generalizing the classical case. Similar to Milnor's result, it is known that the quotient space $\Gamma_{p,q} \backslash \SL_2(\R)$ is homeomorphic to the complement of the $(p,q)$-torus knot\footnote{Precisely, for a certain discrete subgroup $G$ of the universal covering group $\widetilde{SL}_2(\R)$ of $\SL_2(\R)$, the complement of the $(p,q)$-torus knot $K_{p,q}$ in $S^3$ is given as $G \backslash \widetilde{\SL}_2(\R) \cong S^3 - K_{p,q}$, and the projection of $K_{p,q}$ to the lens space $L(pq-p-q,p-1)$ is denoted by $\overline{K}_{p,q}$, (see~\cite{MatsusakaUeki2023}).} $\overline{K}_{p,q}$ in the lens space $L(pq-p-q, p-1)$. The concept of modular knots can similarly be extended as closed orbits of the same geodesic flow. Based on ideas from Goldstein~\cite{Goldstein1973} and more recent work by Burrin~\cite{Burrin2018, Burrin2022}, the first author and Ueki introduced a generalization of the Rademacher symbol $\Psi_{p,q}: \Gamma_{p,q} \to \Z$ for triangle groups, (a precise definition will be given in \cref{sec:Rademacher}). By extending Ghys' first proof through the theory of modular forms, they established a generalized identity between the linking number and the Rademacher symbol.

In Section 5 of~\cite{MatsusakaUeki2023}, a generalization of Rademacher's explicit formula was proposed as a future problem during their discussion of Ghys' third proof. The main theorem of this note addresses this problem and completes the generalization of Ghys' third proof.

\begin{theorem}\label{thm:main}
	For $\gamma \in \Gamma_{p,q}$ conjugate to $\pm S_p^{n_1} U_q^{m_1} \cdots S_p^{n_r} U_q^{m_r}$ with $0 < n_j < p$ and $0 < m_j < q$, we have
	\[
		\Psi_{p,q}(\gamma) = \sum_{j=1}^r (pq - qn_j - pm_j).
	\]
\end{theorem}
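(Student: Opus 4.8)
The plan is to reduce, via the conjugacy- and sign-invariance of $\Psi_{p,q}$ recorded in \cref{sec:Rademacher}, to evaluating $\Psi_{p,q}$ on the single reduced representative $w = S_p^{n_1}U_q^{m_1}\cdots S_p^{n_r}U_q^{m_r}$, and then to compute this by peeling off one syllable at a time. Two structural facts about $\Psi_{p,q}$, both available once the definition is unwound, should drive the computation. First, I would record the values on the generators, expecting $\Psi_{p,q}(S_p) = \Psi_{p,q}(U_q) = 0$ exactly as in the classical case (by homogeneity, since $S_p, U_q$ have finite order in $\PSL_2(\R)$), and hence $\Psi_{p,q}(S_p^{n}) = \Psi_{p,q}(U_q^{m}) = 0$ for all $n,m$. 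Second, the definition should furnish a Dedekind-type cocycle law
\[
	\Psi_{p,q}(\alpha\beta) = \Psi_{p,q}(\alpha) + \Psi_{p,q}(\beta) + R(\alpha,\beta),
\]
where $R$ is an explicit correction depending only on the configuration of the fixed points of $\alpha,\beta,\alpha\beta$ relative to the cusp (the analogue of the terms $\sgn(c(a+d))$ in Dedekind's formula). Isolating this law from the transformation behaviour defining $\Psi_{p,q}$ is the first step.

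Combining the two facts, a single syllable carries its entire value through the correction term, since $\Psi_{p,q}(S_p^{n}U_q^{m}) = R(S_p^{n}, U_q^{m})$. Note there is no contradiction with the vanishing on torsion: in $\Z/p * \Z/q$ the alternating word $S_p^{n}U_q^{m}$ with $0<n<p$, $0<m<q$ has infinite order, so it is never elliptic. The arithmetic core of the proof is then to evaluate this correction and show $R(S_p^{n},U_q^{m}) = pq - qn - pm$. I expect this to reduce to a generalized Dedekind-sum (or cusp-width) expression attached to $S_p^{n}U_q^{m}$, computed from the explicit realization of $S_p$ and $U_q$, which should collapse to the stated linear quantity. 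As a consistency check, $(p,q)=(2,3)$ gives $\Psi_{2,3}(S_2U_3) = 6-3-2 = 1$ and $\Psi_{2,3}(S_2U_3^{2}) = 6-3-4 = -1$; since $U_3^{3} = -I$, the element $S_2U_3^{2}$ equals $-S_2U_3^{-1}$, so by sign-invariance these reproduce Rademacher's values $\varepsilon = \pm 1$.

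It remains to pass from one syllable to the whole word by induction on $r$. Writing $w = w'\,S_p^{n_r}U_q^{m_r}$ with $w' = S_p^{n_1}U_q^{m_1}\cdots S_p^{n_{r-1}}U_q^{m_{r-1}}$ and applying the cocycle law repeatedly yields
\[
	\Psi_{p,q}(w) = \sum_{j=1}^r (pq - qn_j - pm_j) + (\text{cross terms}),
\]
where the cross terms are the corrections $R$ produced at each concatenation. The crux, and the step I expect to be the main obstacle, is to show that these cross terms vanish precisely because $w$ is cyclically reduced: all of its syllables lie on the same side of the cusp and wind in a compatible direction, so the sign data entering $R$ never changes as the word is built up. Geometrically this is the template/ping-pong mechanism underlying Ghys' third proof, and it is exactly what turns the a priori quasimorphism defect into a clean additive answer. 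Verifying it amounts to tracking the attracting/repelling fixed-point configuration of each partial product $w'$ against the incoming syllable; once the cross terms are shown to drop out, invariance transports the result to every conjugate $\pm w$ and completes the proof.
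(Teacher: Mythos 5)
Your plan founders on its first ``structural fact.'' In this paper's normalization (which is Rademacher's), $\Psi_{p,q}$ is conjugation invariant \eqref{eq:class-inv} but it is \emph{not} a homogeneous quasimorphism, and it does \emph{not} vanish on torsion elements. Concretely, $\psi_{p,q}(S_p)=-q$, and for $p\ge 3$ we have $\sgn(S_p)=1$ and $\tr(S_p)=2\cos(\pi/p)>0$, so the definition gives $\Psi_{p,q}(S_p)=-q\neq 0$; already in the classical case $\Psi_{2,3}(U_3)=\Psi(U)=1-3\sgn(c(a+d))=-2\neq 0$ even though $U$ has finite order (the coincidence $\Psi(S)=0$ is what makes your guess tempting, but it fails at $U$). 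Homogeneity also fails outright: for $(p,q)=(3,4)$ one computes $\Psi_{3,4}(S_3)=-4$ while $\Psi_{3,4}(S_3^2)=-8+6\cdot(1-(-1))=4\neq 2\Psi_{3,4}(S_3)$. This error is not cosmetic, because it makes your single-syllable step impossible: the true defect $\Psi_{p,q}(\alpha\beta)-\Psi_{p,q}(\alpha)-\Psi_{p,q}(\beta)$ equals $2pqW(\alpha,\beta)$ plus sign/trace corrections, hence is always an integer multiple of $pq/2$, whereas $pq-qn-pm$ generally is not (for $(p,q)=(3,4)$, $n=m=1$ it equals $5$). So the identity $\Psi_{p,q}(S_p^nU_q^m)=R(S_p^n,U_q^m)$ that you rely on is inconsistent with the very theorem you are proving; the linear part $-qn-pm$ must come from the generator powers, as it does in the paper via \eqref{Spn} and \eqref{Uqm}, namely $\psi_{p,q}(S_p^n)=-qn$ and $\psi_{p,q}(U_q^m)=-mp$. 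Those formulas are themselves not free: they require \cref{positivity}, i.e.\ positivity of the Chebyshev values $C_n(\cos(\pi/p))$, which forces Asai's cocycle $W(S_p,S_p^j)$ to vanish for $0<j<p-1$.

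Your second step --- that the concatenation corrections (``cross terms'') vanish for a cyclically reduced word --- is a true statement, equivalent to the paper's inductive step, but you offer only the heuristic that the syllables ``wind in a compatible direction,'' and substantiating it is precisely where the real work lies. The paper proves \cref{signchange} by induction: every alternating word $S_p^{n_1}U_q^{m_1}\cdots S_p^{n_r}U_q^{m_r}$ has diagonal entries of sign $(-1)^r$ and off-diagonal entries of sign $-(-1)^r$ (or zero), which pins down $\sgn\tr(M)$, $\sgn(M)$, and $\sgn(MS_p^nU_q^m)$; these values are then fed into Asai's closed formula \eqref{Wsgn} for $W$, and a short case analysis on whether the lower-left entries vanish verifies the key identity \eqref{eq1}, producing exactly one extra $pq$ per syllable. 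So while your overall architecture (a cocycle law plus induction on syllables) agrees in spirit with the paper's, one of your two load-bearing claims is false and the other is asserted without proof; as written, the argument cannot be completed.
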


The calculation of the linking number using template theory has already been extended by Dehornoy~\cite[Proposition 5.7]{Dehornoy2015}, who showed that the linking number of a modular knots $C_\gamma$ in the knot complement $L(pq-p-q, p-1) - \overline{K}_{p,q} \cong \Gamma_{p,q} \backslash \SL_2(\R)$ is given by
\[
	\mathrm{lk}(C_\gamma, \overline{K}_{p,q}) = \frac{pq}{pq-p-q} \sum_{j=1}^r \left(\frac{p/2-n_j}{p} + \frac{q/2-m_j}{q}\right),
\]
(with the sign convention adjusted to match that in~\cite{MatsusakaUeki2023}). By comparison, this formula yields the result of Matsusaka--Ueki~\cite[Theorem 4.10]{MatsusakaUeki2023}.

\begin{corollary}
	For every $\gamma \in \Gamma_{p,q}$ with $\mathrm{tr}(\gamma) > 2$, we have
	\[
		\mathrm{lk}(C_\gamma, \overline{K}_{p,q}) = \frac{1}{pq-p-q} \Psi_{p,q}(\gamma).
	\]
\end{corollary}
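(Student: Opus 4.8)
The plan is to prove the Corollary by directly combining the two formulas that are already stated in the excerpt, namely the main \cref{thm:main} of this note and Dehornoy's linking number computation. Both are expressed in terms of the same cyclically reduced word $\pm S_p^{n_1} U_q^{m_1} \cdots S_p^{n_r} U_q^{m_r}$ with $0 < n_j < p$ and $0 < m_j < q$, so the strategy is purely algebraic: show that the two right-hand sides differ exactly by the scalar factor $1/(pq-p-q)$.

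First I would record Dehornoy's formula, rewriting its summand in a common form. Starting from
\[
	\frac{pq}{pq-p-q} \sum_{j=1}^r \left(\frac{p/2-n_j}{p} + \frac{q/2-m_j}{q}\right),
\]
I would clear the internal denominators: the $j$-th term inside the sum is
\[
	\frac{q(p/2-n_j) + p(q/2-m_j)}{pq} = \frac{pq - qn_j - pm_j}{pq},
\]
using $q\cdot(p/2) + p\cdot(q/2) = pq$. Multiplying through by the prefactor $pq/(pq-p-q)$ cancels the $pq$ and yields
\[
	\mathrm{lk}(C_\gamma, \overline{K}_{p,q}) = \frac{1}{pq-p-q} \sum_{j=1}^r (pq - qn_j - pm_j).
\]

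The final step is to invoke \cref{thm:main}, which identifies the sum $\sum_{j=1}^r (pq - qn_j - pm_j)$ with the Rademacher symbol $\Psi_{p,q}(\gamma)$. Substituting this gives precisely
\[
	\mathrm{lk}(C_\gamma, \overline{K}_{p,q}) = \frac{1}{pq-p-q} \Psi_{p,q}(\gamma),
\]
as claimed. I would note that the hypothesis $\mathrm{tr}(\gamma) > 2$ guarantees $\gamma$ is hyperbolic, hence conjugate to such a word with the stated index ranges, so that both input formulas genuinely apply; this matching of hypotheses and the consistency of sign conventions (the parenthetical remark in the excerpt flags that Dehornoy's signs have been adjusted to match \cite{MatsusakaUeki2023}) is the only point requiring care. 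Since no nontrivial analysis is involved, there is no serious obstacle; the entire content of the Corollary is this algebraic simplification, and the real mathematical work lies in \cref{thm:main} and Dehornoy's proposition, both of which are taken as given.
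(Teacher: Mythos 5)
Your proposal is correct and matches the paper's own argument: the corollary is obtained exactly by clearing denominators in Dehornoy's formula to get $\frac{1}{pq-p-q}\sum_{j=1}^r (pq - qn_j - pm_j)$ and then substituting \cref{thm:main}. The paper presents this as an immediate "comparison" of the two formulas, which is precisely the algebraic simplification you carried out.
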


This note is organized as follows. In \cref{sec:Rademacher}, we define the Rademacher symbol $\Psi_{p,q}(\gamma)$ for triangle groups and outline some properties needed to prove the main theorem. We will not explain further on modular knots here, but we recommend referring the original article~\cite{Ghys2007}, Simon's Ph.D.~thesis~\cite{Simon2022}, and the previous work~\cite{MatsusakaUeki2023} for precise definitions. In \cref{sec:Proof}, we prove our main theorem.

\section*{Acknowledgments}
The first author was supported by JSPS KAKENHI (JP21K18141 and JP24K16901) and by the MEXT Initiative through Kyushu University's Diversity and Super Global Training Program for Female and Young Faculty (SENTAN-Q). The second author was supported by the National Research Foundation of Korea (NRF) grant funded by the Korea government (MSIT) (RS-2024-00348504).
\section{The Rademacher symbols for triangle groups} \label{sec:Rademacher}

In~\cite[Theorem B]{MatsusakaUeki2023}, the authors provided several different definitions of the Rademacher symbol. Here, we aim to introduce it using Asai's cocycle $W$. Let $(p,q)$ be as before. We define the triangle group $\Gamma_{p,q} = \Gamma(p,q,\infty)$ as a subgroup of $\SL_2(\R)$ generated by the two matrices
\[
	S_p \coloneqq \pmat{0 & -1 \\ 1 & 2\cos (\pi/p)}, \quad U_q \coloneqq \pmat{2 \cos (\pi/q) & -1 \\ 1 & 0}
\]
satisfying the relations $S_p^p = U_q^q = -I$. The following formulas are known.
\begin{lemma}[{\cite[Lemma 2.2]{MatsusakaUeki2023}}]\label{recur}
	For every integer $n \in \Z$, we define the Chebyshev polynomial of the second kind $C_n(x) \in \Z[x]$ by setting $C_0(x) = 0, C_1(x) = 1$, and using the recursion $C_{n+1}(x) = 2x C_n(x) - C_{n-1}(x)$. Then we have that $C_n(\cos t) = \sin nt/\sin t$ and the generators $S_p$ and $U_q$ satisfy
	\[
		S_p^n = \pmat{-C_{n-1}(s) & -C_n(s) \\ C_n(s) & C_{n+1}(s)}, \quad U_q^n = \pmat{C_{n+1}(u) & -C_n(u) \\ C_n(u) & -C_{n-1}(u)},
	\]
	where we put $s = \cos (\pi/p)$ and $u = \cos (\pi/q)$.
\end{lemma}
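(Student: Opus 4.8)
The plan is to prove the two assertions of the lemma separately: first the trigonometric evaluation $C_n(\cos t) = \sin(nt)/\sin t$, and then the two closed-form matrix powers, in each case by an induction that exploits the defining recursion $C_{n+1}(x) = 2xC_n(x) - C_{n-1}(x)$.

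For the trigonometric identity I would set $f_n(t) \coloneqq \sin(nt)/\sin t$ and note that $f_0(t) = 0 = C_0(\cos t)$ and $f_1(t) = 1 = C_1(\cos t)$, matching the initial data. The product-to-sum formula $2\cos t \sin(nt) = \sin((n+1)t) + \sin((n-1)t)$ rearranges to $f_{n+1}(t) = 2\cos t\, f_n(t) - f_{n-1}(t)$, so $f_n(\cdot)$ and $C_n(\cos(\cdot))$ obey the same second-order recursion with the same first two terms; hence they agree for every $n$. Since the recursion solves backwards as $C_{n-1}(x) = 2xC_n(x) - C_{n+1}(x)$, both $C_n$ and the identity extend to all $n \in \Z$; in particular $C_{-1}(x) = 2xC_0(x) - C_1(x) = -1$, a value I will need for the base cases below.

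For the matrix formulas I would argue by induction on $n$. Writing $M_n \coloneqq \smat{-C_{n-1}(s) & -C_n(s) \\ C_n(s) & C_{n+1}(s)}$, the base case $n=0$ gives $M_0 = I$ (using $C_{-1}(s) = -1$, $C_0(s) = 0$, $C_1(s) = 1$), and $n=1$ gives $M_1 = S_p$ after computing $C_2(s) = 2s$. For the inductive step I would form the product $M_n S_p$ and collapse each entry using the recursion in the forms $2sC_n(s) - C_{n-1}(s) = C_{n+1}(s)$ and $2sC_{n+1}(s) - C_n(s) = C_{n+2}(s)$, which yields exactly $M_{n+1}$; this gives $S_p^n = M_n$ for $n \ge 0$, and right-multiplication by $S_p^{-1}$ (equivalently, the backward recursion) extends it to $n < 0$. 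The argument for $U_q$ is entirely parallel: with $N_n \coloneqq \smat{C_{n+1}(u) & -C_n(u) \\ C_n(u) & -C_{n-1}(u)}$ one checks $N_0 = I$ and $N_1 = U_q$, and then verifies $N_n U_q = N_{n+1}$ by the same two applications of the recursion.

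The content here is essentially routine, with no genuine analytic or structural difficulty. The only point requiring care is bookkeeping: keeping the signs and index shifts consistent across the four matrix entries, and seeding the induction with the backward-extended value $C_{-1} = -1$ so that the formulas hold for the full range $n \in \Z$ rather than only $n \ge 1$. I expect that verifying $M_n S_p = M_{n+1}$ and its $U_q$ analogue entry by entry is the step most susceptible to arithmetic slips, but since each entry reduces to a single instance of the recursion, no real obstacle arises.
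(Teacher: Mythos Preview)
Your argument is correct and entirely standard: the trigonometric identity follows from the product-to-sum formula and induction on the shared recursion, and the matrix formulas follow from checking $M_0 = I$, $M_1 = S_p$ (respectively $N_0 = I$, $N_1 = U_q$) and verifying $M_n S_p = M_{n+1}$ entry by entry via the recursion. The backward extension to negative $n$ via $C_{n-1} = 2xC_n - C_{n+1}$ is handled cleanly.

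There is nothing to compare against here: the paper does not supply its own proof of this lemma but simply quotes it from \cite[Lemma~2.2]{MatsusakaUeki2023}. Your write-up would serve as a self-contained proof if one were desired.
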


In 1970, Asai~\cite{Asai1970} examined the following function to characterize the Rademacher symbol more naturally from a group-theoretic perspective.

\begin{definition}
	Let $j: \SL_2(\R) \times \bbH \to \C$ be the automorphic factor defined by $j(\smat{a & b \\ c & d}, z) = cz+d$. Then, we define $W: \SL_2(\R) \times \SL_2(\R) \to \Z$ by
	\[
		W(g_1, g_2) \coloneqq \frac{1}{2\pi i} \Big(\log j(g_1, g_2 z) + \log j(g_2, z) - \log j(g_1 g_2, z) \Big),
	\]
	where we take a branch $\Im \log j(g, z) \in [-\pi, \pi)$.
\end{definition}

To elaborate on the definition, it is first derived from the well-known relation $j(g_1 g_2, z) = j(g_1, g_2 z) j(g_2, z)$. Consequently, since $e^{2\pi iW(g_1, g_2)} = 1$, it follows that $W(g_1, g_2) \in \Z$. Moreover, because the defining equation for $W$ is continuous in $z \in \bbH$, it does not depend on the choice of $z$. 

By direct calculation, we can verify that
\[
	W(g_1 g_2, g_3) + W(g_1, g_2) = W(g_1, g_2 g_3) + W(g_2, g_3)
\]
for any $g_1, g_2, g_3 \in \SL_2(\R)$. This shows that $W$ is a $2$-cocycle. Additionally, based on the convention on angles, we find that $-3\pi < 2\pi W(g_1, g_2) < 3\pi$, which implies that $W(g_1, g_2) \in \{-1, 0, 1\}$.

We can also describe the value of $W$ more explicitly. According to Asai, adopting the branch of the logarithm as $[-\pi, \pi)$ is a key point, as this choice allows us to express the value of $W$ in a ``wonderfully simple" form. To describe this, we introduce the sign function $\sgn: \SL_2(\R)\rightarrow\{-1,1\}$ defined by
\[
	\sgn (g) \coloneqq \begin{cases}
		\sgn c &\text{if } c\ne0,\\
		\sgn d &\text{if } c=0
	\end{cases}
\]
for $g = \smat{a & b \\ c & d} \in \SL_2(\R)$. Then, Asai showed the following formula.

\begin{proposition}[{Asai~\cite[Theorem 2]{Asai1970}}]
	For any $g_1, g_2 \in \SL_2(\R)$, we have
	\begin{align}\label{eq:W-explicit}
		W(g_1,g_2) &= \begin{cases}
			1 & \text{if } \sgn(g_1)=\sgn(g_2)=1,\sgn(g_1 g_2)=-1,\\
			-1 & \text{if } \sgn(g_1)=\sgn(g_2)=-1, \sgn(g_1g_2)=1,\\
			0 & \text{if otherwise}
		\end{cases}\\ \label{Wsgn}
		&= \frac{1}{4}\Big(\sgn(g_{1})+\sgn(g_{2})-\sgn(g_{1}g_{2})-\sgn(g_{1})\sgn(g_{2})\sgn(g_{1}g_{2})\Big).
	\end{align}
\end{proposition}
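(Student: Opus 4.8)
The plan is to unwind the definition of $W$ into a statement purely about arguments of complex numbers and then reduce everything to a finite sign bookkeeping. First I would fix $z \in \bbH$ and abbreviate $\alpha \coloneqq \Im \log j(g_1, g_2 z)$, $\beta \coloneqq \Im \log j(g_2, z)$, and $\gamma \coloneqq \Im \log j(g_1 g_2, z)$, all lying in $[-\pi, \pi)$ by the choice of branch. Since the moduli satisfy $|j(g_1 g_2, z)| = |j(g_1, g_2 z)|\,|j(g_2, z)|$ by the cocycle relation $j(g_1 g_2, z) = j(g_1, g_2 z)\, j(g_2, z)$, the real part of the bracketed expression in the definition vanishes, and the definition collapses to $2\pi W(g_1, g_2) = \alpha + \beta - \gamma$. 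As $\alpha + \beta \in [-2\pi, 2\pi)$ and $\gamma \in [-\pi, \pi)$, this is consistent with the bound $W \in \{-1, 0, 1\}$ already recorded, and it exhibits $W$ as precisely the integer that wraps $\alpha + \beta$ back into $[-\pi, \pi)$.

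The key step is a lemma identifying $\sgn$ with the half in which the argument lies: for every $g \in \SL_2(\R)$ and every $w \in \bbH$,
\[
	\sgn(g) = +1 \iff \Im \log j(g, w) \in [0, \pi), \qquad \sgn(g) = -1 \iff \Im \log j(g, w) \in [-\pi, 0).
\]
This follows by writing $g = \smat{a & b \\ c & d}$, so that $\Im j(g, w) = c \, \Im w$; when $c \neq 0$ the sign of the imaginary part is $\sgn c = \sgn(g)$, while when $c = 0$ one has $j(g, w) = d \in \R^\times$ with argument in $\{0, -\pi\}$ according to $\sgn d = \sgn(g)$. Here the half-open convention $[-\pi, \pi)$ is exactly what forces the boundary values $0$ and $-\pi$ onto the correct sides. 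Since $\SL_2(\R)$ preserves $\bbH$ we have $g_2 z \in \bbH$, so the lemma applies to $g_1$ at the translated point $g_2 z$, to $g_2$ at $z$, and to $g_1 g_2$ at $z$; hence $\sgn(g_1), \sgn(g_2), \sgn(g_1 g_2)$ record the halves containing $\alpha, \beta, \gamma$ respectively.

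With this dictionary, \eqref{eq:W-explicit} falls out of a short case analysis on $(\sgn(g_1), \sgn(g_2))$. If the two signs differ, then $\alpha$ and $\beta$ lie in opposite halves, so $\alpha + \beta \in (-\pi, \pi) \subset [-\pi, \pi)$, no wrap occurs, and $W = 0$. If $\sgn(g_1) = \sgn(g_2) = +1$, then $\alpha, \beta \in [0, \pi)$ and $\alpha + \beta \in [0, 2\pi)$; the wrap $W = 1$ happens precisely when $\alpha + \beta \geq \pi$, which by the lemma is exactly the case $\gamma = \alpha + \beta - 2\pi \in [-\pi, 0)$, i.e. $\sgn(g_1 g_2) = -1$, and otherwise $W = 0$. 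The case $\sgn(g_1) = \sgn(g_2) = -1$ is symmetric and yields $W = -1$ iff $\sgn(g_1 g_2) = +1$. Finally, \eqref{Wsgn} is obtained by verifying that the sign-polynomial $\tfrac14(x + y - z - xyz)$ agrees with the table in \eqref{eq:W-explicit} at each of the eight points $(x, y, z) = (\sgn(g_1), \sgn(g_2), \sgn(g_1 g_2)) \in \{\pm 1\}^3$, a routine evaluation.

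The main obstacle is the branch bookkeeping rather than any hard analysis: one must handle the half-open interval $[-\pi, \pi)$ consistently, in particular the degenerate cases $c = 0$ where $j(g, w)$ is real and its argument takes a boundary value $0$ or $-\pi$, and one must keep track of the fact that $g_2 z$ stays in $\bbH$ so that the $\sgn$-versus-argument lemma legitimately applies to $g_1$ at the shifted point. Once the lemma is installed with the correct conventions, the remaining case analysis and the verification of \eqref{Wsgn} are purely mechanical.
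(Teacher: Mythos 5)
Your proof is correct, and it is worth noting that the paper itself gives no proof of this proposition at all: it is imported verbatim from Asai's paper (Theorem 2 of \cite{Asai1970}), so your argument supplies the missing details rather than paralleling anything in the text. The structure you use --- cancel the moduli via the cocycle relation so that $2\pi W = \alpha + \beta - \gamma$, establish the dictionary $\sgn(g) = +1 \iff \Im \log j(g,w) \in [0,\pi)$ and $\sgn(g) = -1 \iff \Im \log j(g,w) \in [-\pi,0)$ (where the convention $[-\pi,\pi)$ correctly places the real values $d>0$ and $d<0$ at arguments $0$ and $-\pi$), and then read off $W$ as the wrapping integer --- is exactly the mechanism that makes Asai's branch choice give a ``wonderfully simple'' answer, and your eight-point check of the polynomial $\tfrac14(x+y-z-xyz)$ settles \eqref{Wsgn}. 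One small imprecision: in the mixed-sign case you write $\alpha + \beta \in (-\pi,\pi)$, but the endpoint $-\pi$ is attainable (take $g_1 = I$, $g_2 = -I$, so $\alpha = 0$, $\beta = -\pi$); the correct statement is $\alpha + \beta \in [-\pi,\pi)$, which is still inside the branch range, so no wrap occurs and the conclusion $W = 0$ is unaffected. With that endpoint fixed, the argument is complete.
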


As discovered by Asai~\cite[Theorem 3]{Asai1970} and generalized to triangle groups in~\cite[Theorem 3.9]{MatsusakaUeki2023}, there exists a unique function $\psi_{p,q} : \Gamma_{p,q} \to \Z$ that satisfies the relation
\begin{align}\label{definitionofpsi_{p,q}}
	\psi_{p,q}(\g_{1}\g_{2}) = \psi_{p,q}(\g_{1})+\psi_{p,q}(\g_{2})+2pqW(\g_{1},\g_{2})
\end{align}
for any $\gamma_1, \gamma_2 \in \Gamma_{p,q}$. In particular, for the generators, it follows that $\psi_{p,q}(S_p) = -q$ and $\psi_{p,q}(U_q) = -p$, and we have $\psi_{2,3}(\gamma) = \Psi(\gamma)$ for $\gamma \in \SL_2(\Z)$ with $\mathrm{tr}(\gamma) > 0$. The modification of this function to make it a conjugacy class invariant, based on Rademacher, gives rise to the \emph{Rademacher symbol for triangle groups}, as indicated in the title of this note.

\begin{definition}
	We define the Rademacher symbol $\Psi_{p,q}:\G_{p,q}\rightarrow\Z$ by
	\[
		\Psi_{p,q}(\g) \coloneqq \psi_{p,q}(\g)+\frac{pq}{2}\sgn(\g)(1-\sgn\tr(\g)).
	\]
\end{definition}

We know that $\Psi_{2,3}(\gamma) = \Psi(\gamma)$ for $\Gamma_{2,3} = \SL_2(\Z)$, and as shown in~\cite[Proposition 3.16]{MatsusakaUeki2023}, this is a conjugacy class invariant on $\Gamma_{p,q}/\{\pm I\}$, that is, for any $\gamma, g \in \Gamma_{p,q}$, we have
\begin{align}\label{eq:class-inv}
	\Psi_{p,q}(\gamma) = \Psi_{p,q}(-\gamma) = \Psi_{p,q}(g^{-1} \gamma g).
\end{align}

Next, we will prepare several lemmas regarding the Asai cocycle $W$ that are necessary for proving \cref{thm:main}.

\begin{lemma}\label{positivity}
	Let $n$ be a positive integer such that $0<n\leq p$. Then we have
	\[
		\sgn(S_{p}^{n}) = \begin{cases}
			-1 & \text{if } n=p,\\
			1 & \text{if } 0<n<p,
		\end{cases}
		\quad
		W(S_{p},S_{p}^{n})=\begin{cases}
			1 & \text{if } n=p-1, \\
			0 & \text{if } 0<n<p-1.
		\end{cases}
	\]
	Similar results hold for $U_{q}$, that is,
	\begin{equation*}
		\sgn(U_{q}^{m})=\begin{cases}
			-1 & \text{if } m=q,\\
		1 & \text{if } 0<m<q,
		\end{cases}
		\quad
		W(U_{q},U_{q}^{m})=\begin{cases}
			1 & \text{if } m=q-1,\\
			0 & \text{if } 0<m<q-1.
		\end{cases}
	\]
\end{lemma}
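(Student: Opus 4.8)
The plan is to read off everything directly from the matrix formulas in \cref{recur} together with Asai's explicit description of $W$ in \eqref{eq:W-explicit}; no analytic input beyond the sign of a sine is needed. First I would record that, by \cref{recur}, the lower-left entry of $S_p^n$ is exactly $C_n(s) = \sin(n\pi/p)/\sin(\pi/p)$ with $s = \cos(\pi/p)$. For $0 < n < p$ the angle $n\pi/p$ lies strictly between $0$ and $\pi$, so $\sin(n\pi/p) > 0$ and this entry is positive; by the definition of $\sgn$ on $\SL_2(\R)$ this gives $\sgn(S_p^n) = 1$. For $n = p$ the lower-left entry vanishes since $\sin\pi = 0$, so the convention forces us to look at the lower-right entry $C_{p+1}(s) = \sin((p+1)\pi/p)/\sin(\pi/p) = -1$ (equivalently, just use $S_p^p = -I$), yielding $\sgn(S_p^p) = -1$. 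This establishes the sign table for $S_p$.

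For the cocycle values I would use that $S_p \cdot S_p^n = S_p^{n+1}$ and feed the three signs $\sgn(S_p)$, $\sgn(S_p^n)$, $\sgn(S_p^{n+1})$ into \eqref{eq:W-explicit}. Since $p \ge 2$ we have $\sgn(S_p) = \sgn(S_p^1) = 1$, and for $0 < n < p$ the sign table gives $\sgn(S_p^n) = 1$ as well. Thus $W(S_p, S_p^n)$ can only be $1$ or $0$, and it equals $1$ precisely when $\sgn(S_p^{n+1}) = -1$. By the sign table this last condition holds if and only if $n+1 = p$, i.e.\ $n = p-1$; for $0 < n < p-1$ all three signs are $+1$, placing us in the ``otherwise'' branch and giving $W = 0$.

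Finally, the statements for $U_q$ follow by the identical argument: \cref{recur} shows the lower-left entry of $U_q^m$ is $C_m(u) = \sin(m\pi/q)/\sin(\pi/q)$ with $u = \cos(\pi/q)$, which is positive for $0 < m < q$ and vanishes at $m = q$ (where the lower-right entry is $-C_{q-1}(u) = -1$, consistent with $U_q^q = -I$), and then \eqref{eq:W-explicit} applied to $U_q \cdot U_q^m = U_q^{m+1}$ gives the claimed values. I do not anticipate a genuine obstacle here; the only point requiring care is the boundary case, where the lower-left entry degenerates to $0$ and one must fall back on the second clause in the definition of $\sgn$ to read the sign off the lower-right entry.
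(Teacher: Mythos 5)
Your proposal is correct and takes essentially the same approach as the paper: both read the lower-left entry of $S_p^n$ as $C_n(s)$ via \cref{recur}, establish its positivity for $0 < n < p$, handle $n = p$ through $S_p^p = -I$, and then feed the signs of $S_p$, $S_p^n$, $S_p^{n+1}$ into Asai's formula \eqref{eq:W-explicit}. The only (minor) difference is in the positivity step, where you directly evaluate $C_n(\cos(\pi/p)) = \sin(n\pi/p)/\sin(\pi/p) > 0$, while the paper locates the zeros of $C_n(x)$ at $\cos(\pi k/n)$ and invokes the intermediate value theorem; your substitution is the more direct version of the same fact.
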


\begin{proof}
For simplicity we only prove the case of $S_{p}^{n}$. Clearly our claim is true when $n=p$. By \cref{recur}, $(2,1)$-entry of $S_{p}^{n}$ is equal to $C_n(s)$ where $s= \cos (\pi/p)$. Since $C_n(x) \in \Z[x]$ is a polynomial of degree $n-1$ satisfying $C_n(\cos t) = \sin nt/ \sin t$, we can verify that the zeros of $C_n(x)$ are located at $x = \cos \frac{\pi k}{n}$ for $k = 1, 2, \dots, n-1$. Therefore, the value $s = \cos(\pi/p)$ is greater than all zeros of $C_n(x)$ if $0 < n < p$. Since $C_n(x) = (2x)^{n-1} +$ (lower degree terms), for $n > 1$, we see that $C_n(x) \to \infty$ as $x \to \infty$. Thus, by the intermediate value theorem, we obtain $C_n(s) > 0$ and conclude $\sgn(S_p^n) = 1$ for $0 < n < p$. Finally, it follows that
\[
	W(S_{p},S_{p}^{n})= \begin{cases}
		1 & \text{if } \sgn(S_{p}^{n})=1 \text{ and } \sgn(S_{p}^{n+1})=-1,\\
		0 & \text{if otherwise}
	\end{cases}
\]
since $\sgn(S_{p})=1$. Hence, we obtain the desired result.
\end{proof}

As an application of the proof of \cref{positivity}, we obtain the following.

\begin{lemma}\label{signchange}
	If 
	\[
		M=\begin{pmatrix} a & b \\ c & d \end{pmatrix}=S_{p}^{n_{1}}U_{q}^{m_{1}} \cdots S_{p}^{n_{r}}U_{q}^{m_{r}},
	\]
	where $0<n_{j}<p$ and $0<m_{j}<q$ for all $1\leq j \leq r$, then we have
	\[
	\begin{cases}
		a, d <0 \text{ and } b,c \geq0 & \text{if $r$ is odd},\\
		a, d >0 \text{ and } b,c \leq0 & \text{if $r$ is even}.
	\end{cases}
	\]
\end{lemma}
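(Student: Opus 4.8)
The plan is to prove the statement by induction on $r$, taking a single factor $S_p^{n_j}U_q^{m_j}$ as the basic block whose sign pattern I can control. The first step is to record the signs of the entries of the individual powers in the admissible ranges. Combining \cref{recur} with the positivity established in the proof of \cref{positivity} --- namely $C_k(s)>0$ for $0<k<p$ and $C_k(u)>0$ for $0<k<q$, together with $C_0=0$, $C_p(s)=0$, and $C_q(u)=0$ --- one sees that for $0<n<p$ the matrix $S_p^n$ has entry signs $\smat{\le 0 & < 0 \\ > 0 & \ge 0}$, and for $0<m<q$ the matrix $U_q^m$ has entry signs $\smat{\ge 0 & < 0 \\ > 0 & \le 0}$.

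The base case $r=1$ comes from multiplying one such pair. Writing $S_p^{n}=\smat{\alpha_1 & \alpha_2 \\ \alpha_3 & \alpha_4}$ and $U_q^{m}=\smat{\beta_1 & \beta_2 \\ \beta_3 & \beta_4}$ with the signs just found, a direct expansion shows that $S_p^{n}U_q^{m}$ has entry signs
\[
	\pmat{< 0 & \ge 0 \\ \ge 0 & < 0}.
\]
The diagonal entries are \emph{strictly} negative, since $\alpha_1\beta_1+\alpha_2\beta_3$ and $\alpha_3\beta_2+\alpha_4\beta_4$ each contain a genuinely nonzero cross-term ($\alpha_2\beta_3<0$ and $\alpha_3\beta_2<0$, products of a strictly negative by a strictly positive quantity), while the two off-diagonal entries are each a sum of two terms of the same weak sign. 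This matches the odd case of the lemma for $r=1$.

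For the inductive step I would set $M^{(k)}=S_p^{n_1}U_q^{m_1}\cdots S_p^{n_k}U_q^{m_k}$, so that $M^{(k+1)}=M^{(k)}\bigl(S_p^{n_{k+1}}U_q^{m_{k+1}}\bigr)$, and multiply on the right by a block of the fixed sign type $\smat{< 0 & \ge 0 \\ \ge 0 & < 0}$. Assuming $M^{(k)}$ has the pattern dictated by its parity, an entry-by-entry sign check shows that right multiplication by such a block flips the pattern to the one required for parity $k+1$: writing $M^{(k)}=\smat{a & b \\ c & d}$ and the block as $\smat{p_1 & p_2 \\ p_3 & p_4}$, each diagonal entry of $M^{(k+1)}$ acquires a strictly signed contribution (of the form $ap_1$ or $dp_4$) that forces the correct strict sign, while each off-diagonal entry becomes a sum of two terms of the correct weak sign. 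Thus the two parities alternate exactly as claimed.

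The computation is routine sign-chasing, and the only delicate point is the bookkeeping of strict versus non-strict inequalities. The off-diagonal entries can genuinely vanish --- for instance the relevant cross-terms drop out when $n_j=p-1$ and $m_j=q-1$ force a zero entry in one of the factors --- so only weak inequalities are available there; by contrast, the induction must carry \emph{strict} sign information on the diagonal in order to reproduce a strict sign after the next multiplication. The crux is therefore to verify that the strictly signed cross-terms ($\alpha_2\beta_3,\alpha_3\beta_2$ at the base, and $ap_1,dp_4$ at each step) are never cancelled, so that the diagonal never degenerates to zero and the parity induction closes.
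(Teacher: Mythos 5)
Your proof is correct and takes essentially the same route as the paper's: both argue by induction on $r$ with $S_p^{n_j}U_q^{m_j}$ as the basic block, using the positivity of the Chebyshev values $C_k(s)$, $C_k(u)$ (with the vanishing at $k=0,p$ and $k=0,q$) for the base case, and an entry-by-entry sign check of the matrix product, with strict signs on the diagonal and weak signs off it, for the inductive step. The only cosmetic difference is that you multiply the recorded sign patterns of $S_p^n$ and $U_q^m$ abstractly, whereas the paper writes out $S_p^n U_q^m$ explicitly in Chebyshev form before reading off the signs.
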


\begin{proof}
We use induction on $r$. When $r=1$, it immediately follows from the facts that
\begin{align*}
	S_{p}^{n}U_{q}^{m} &=\begin{pmatrix}-C_{n-1}(s) C_{m+1}(u) - C_{n}(s) C_{m}(u) & C_{n-1}(s) C_{m}(u) + C_{n}(s) C_{m-1}(u) \\ C_{n}(s) C_{m+1}(u) + C_{n+1}(s) C_{m}(u) & -C_{n}(s) C_{m}(u) - C_{n+1}(s) C_{m-1}(u) \end{pmatrix},
\end{align*}
and
\[
	C_{n}(s) \begin{cases}
		>0  & \text{if } 0<n<p,\\
		=0 & \text{if } n=0, p,
	\end{cases}
	\quad
	C_{m}(u) \begin{cases}
		>0  & \text{if } 0<m<q,\\
		=0 & \text{if } m=0, q.
	\end{cases}
\]
Next, we assume that the claim holds for $M$ with length $r$. For convenience, we let $S_{p}^{n}U_{q}^{m}=\begin{psmallmatrix} a' & b' \\ c' & d' \end{psmallmatrix}$. If $r$ is odd, then we see that
\[
	MS_{p}^{n}U_{q}^{m}=\begin{pmatrix} a a'+ b c'&a b'+ b d'\\ c a'+ d c'& c b'+d d' \end{pmatrix}=\begin{pmatrix}>0&\leq0\\\leq0&>0\end{pmatrix}.
\]
The claim holds in exactly the same way when $r$ is even.
\end{proof}

\section{Proof of \cref{thm:main}} \label{sec:Proof}

By the definition of $\psi_{p,q}(\gamma)$ in \eqref{definitionofpsi_{p,q}} and \cref{positivity}, we see that
\begin{align}\label{Spn}
	\psi_{p,q}(S_{p}^{n})=n\psi_{p,q}(S_{p})+2pq\sum\limits_{j=1}^{n-1}W(S_{p},S_{p}^{j})=-nq.
\end{align}
and
\begin{align}\label{Uqm}
	\psi_{p,q}(U_{q}^{m})=m\psi_{p,q}(U_{q})+2pq\sum\limits_{j=1}^{m-1}W(U_{q},U_{q}^{j})=-mp.
\end{align}
We proceed by induction on $r$. From \eqref{Wsgn}, \cref{positivity}, and the above computations, we deduce that
\begin{align*}
\Psi_{p,q}(S_{p}^{n}U_{q}^{m})&=-nq-mp+\frac{pq}{2}\Big(2-\sgn(S_{p}^{n}U_{q}^{m})-\sgn(S_{p}^{n}U_{q}^{m})\sgn\tr(S_{p}^{n}U_{q}^{m})\Big).
\end{align*}
By \cref{signchange}, we know $\tr(S_{p}^{n}U_{q}^{m})<0$, and hence we obtain
\[
	\Psi_{p,q}(S_{p}^{n}U_{q}^{m})=pq-nq-mp,
\]
which proves the theorem when $r=1$.

Next, we let $M=\begin{psmallmatrix} a & b \\ c & d \end{psmallmatrix}\in\G_{p,q}$ be of length $r$ that satisfies the theorem, and set $S_{p}^{n}U_{q}^{m}=\begin{psmallmatrix} a' & b' \\ c' & d' \end{psmallmatrix}$. If necessary, we can replace $M$ by $-M$ to ensure $a, d < 0$ and $b, c \ge 0$ by \cref{signchange}. Using the definitions, we see that
\begin{align*}
\Psi_{p,q}(MS_{p}^{n}U_{q}^{m}) &=\psi_{p,q}(MS_{p}^{n}U_{q}^{m})+\frac{pq}{2}\sgn(MS_{p}^{n}U_{q}^{m})(1-\sgn\tr (MS_{p}^{n}U_{q}^{m}))\\
	&=\psi_{p,q}(M)+\psi_{p,q}(S_{p}^{n})+\psi_{p,q}(U_{q}^{m})+2pq\Big(W(M,S_{p}^{n}U_{q}^{m})+W(S_{p}^{n},U_{q}^{m})\Big)\\
	&\quad +\frac{pq}{2}\sgn(MS_{p}^{n}U_{q}^{m})(1-\sgn\tr (MS_{p}^{n}U_{q}^{m}))\\
	&=\Psi_{p,q}(M)+\psi_{p,q}(S_{p}^{n})+\psi_{p,q}(U_{q}^{m})+2pq\Big(W(M,S_{p}^{n}U_{q}^{m})+W(S_{p}^{n},U_{q}^{m})\Big)\\
	&\quad -\frac{pq}{2}\sgn(M)(1-\sgn\tr(M))+\frac{pq}{2}\sgn(MS_{p}^{n}U_{q}^{m})(1-\sgn\tr (MS_{p}^{n}U_{q}^{m})).
\end{align*}
Hence, it suffices to show that
\begin{align*}
	pq-nq-mp&=\psi_{p,q}(S_{p}^{n})+\psi_{p,q}(U_{q}^{m})+2pq\Big(W(M,S_{p}^{n}U_{q}^{m})+W(S_{p}^{n},U_{q}^{m})\Big)\\
	&-\frac{pq}{2}\sgn(M)(1-\sgn\tr(M))+\frac{pq}{2}\sgn(MS_{p}^{n}U_{q}^{m})(1-\sgn\tr (MS_{p}^{n}U_{q}^{m})).
\end{align*}
Equivalently, by \eqref{Spn} and \eqref{Uqm},
\begin{align}\label{eq1}
\begin{split}
	1&=2\Big(W(M,S_{p}^{n}U_{q}^{m})+W(S_{p}^{n},U_{q}^{m})\Big)\\
	&\quad -\frac{\sgn(M)}{2}(1-\sgn\tr(M))+\frac{\sgn(MS_{p}^{n}U_{q}^{m})}{2}(1-\sgn\tr(MS_{p}^{n}U_{q}^{m})).
\end{split}
\end{align}

By \eqref{Wsgn} and \cref{positivity}, we have
\begin{align}\label{rhsof3.3}
\begin{split}
	\text{RHS of \eqref{eq1}} &=\frac{1}{2}\Big(2-\sgn(S_{p}^{n}U_{q}^{m})\big(1+\sgn(M)\sgn(MS_{p}^{n}U_{q}^{m})\big)\\
	&\quad -\sgn(M) -\sgn(MS_{p}^{n}U_{q}^{m}) \Big),
\end{split}
\end{align}
where we use the fact that $M$ was chosen so that $\tr(M) < 0$ and $\tr(MS_p^n U_q^m) > 0$.
\begin{itemize}
	\item If $c>0$ and $c' \geq0$, then $\sgn(M)=1$ and $\sgn(MS_{p}^{n}U_{q}^{m})=-1$. Thus the equation \eqref{rhsof3.3} is equal to $1$.
	\item If $c=0$ and $c'>0$, then $\sgn(M)=-1$, $\sgn(S_{p}^{n}U_{q}^{m})=1$, and $\sgn(MS_{p}^{n}U_{q}^{m})=-1$. Thus the equation \eqref{rhsof3.3} is equal to $1$.
	\item If $c=c'=0$, then $\sgn(M)=-1$ and $\sgn(MS_{p}^{n}U_{q}^{m})=1$. Thus the equation \eqref{rhsof3.3} is equal to $1$.
\end{itemize}
Thus, since \eqref{eq1} holds in all cases, the proof is complete.

\begin{remark*}
	Finally, let us confirm that \cref{thm:main} includes Rademacher's result in the case of $(p,q) = (2,3)$. Noting that $U_3^2 = -U_3^{-1}$, for $0 < n < 2$ and $0 < m < 3$, we have
	\[
		pq - qn - pm = 3 - 2m = \begin{cases}
			1 &\text{if } m=1,\\
			-1 &\text{if } m=2,
		\end{cases}
	\]
	which indeed reproduces Rademacher's original explicit formula.
\end{remark*}

\bibliographystyle{amsalpha}
\bibliography{References}

\end{document}